\newtheorem{theorem}{Theorem}
\newtheorem{lemma}[theorem]{Lemma}
\newtheorem{fact}[theorem]{Fact}
\newcommand{\A}{\mathcal{A}}
\newcommand{\E}[1]{\mathbb{E}\left[#1\right]}
\title{Upper bounds on the average edit distance between two random strings}
\author{Matthieu Rosenfeld\\
\small LIRMM, Univ Montpellier, CNRS, Montpellier, France}
\date{\today}
\begin{document}
\maketitle

\begin{abstract}
We study the average edit distance between two random strings. More precisely, we adapt a technique introduced by Lueker in the context of the average longest common subsequence of two random strings to improve the known upper bound on the average edit distance. We improve all the known upper bounds for small alphabets. We also provide a new implementation of Lueker technique to improve the lower bound on the average length of the longest common subsequence of two random strings for all small alphabets of size other than $2$ and $4$.
\end{abstract}
\section{Introduction}
We consider two different notions of similarity between pairs of strings.
The first notion is the \emph{length of the longest common subsequence} between two strings $u$, and $v$, denoted by $\operatorname{LCS}(u,v)$. 
The second notion is the edit distance (also called Levenshtein distance) between two strings $u$, and $v$, denoted by $d_e(u,v)$, which is the minimum number of substitutions, deletions and insertions necessary to transform $u$ to $v$.

In particular, we are interested in the average edit distance denoted $e_k(n)$ (resp. average length of the longest common subsequence denoted $\ell_k(n)$) between two random $k$-ary strings of length $n$. More precisely, we provide bounds on the two following quantities whose existence is implied by Fekete's Lemma
\begin{equation*}
\alpha_k=\lim\limits_{n\rightarrow\infty}\frac{e_k(n)}{n}\text{ and } \gamma_k=\lim\limits_{n\rightarrow\infty}\frac{\ell_k(n)}{n}\,.
\end{equation*}
The so called Chvátal-Sankoff constant $\gamma_2$, and the other $\gamma_k$ received a lot of attention since 1975 \cite[\ldots]{10.1214/aoap/1177004903,Baeza-Yates1999Aug, Bukh2022Apr,Bundschuh2001Aug,chvatal,Dancik2005May,Dancik1994,Deken1979Jan,Kiwi2005Nov,Tiskin2022Dec}. In particular, we know that $\gamma_2$ is algebraic \cite{Tiskin2022Dec} and that $0.788071 \le\gamma_2\le0.826280$ \cite{Lueker2009May}. Until recently the best bounds on $\gamma_k$ for other small values of $k$ were given in \cite{Kiwi2009Jul}. A few days before the publication of the present article on the arXiv another preprint improved most of the bounds known for $\gamma_k$ and variant of $\gamma_k$ that consider the expected LCS of $t$ random strings \cite{Heineman2024Jul}. In particular, they proved $\gamma_2\ge0.792665$
We improve on all the bounds for $\gamma_k$ given by \cite{Heineman2024Jul} other than $\gamma_2$ and $\gamma_4$ where we only beat the previous best bound by Lueker \cite{Lueker2009May}.

The study of $\alpha_k$ ($\alpha_4$ and $\alpha_2$ in particular) only started more recently in the context of DNA reconstruction from reads with errors \cite{Ganguly2021Mar} and nearest neighbor search \cite{Rubinstein2018Jun}. The best bounds known on the $\alpha_k$ have been obtained recently \cite{Bilardi2022Nov,Schimd2019Oct}.

The main contribution of this article is to improve the upper bounds on the value of $\alpha_k$ for small $k$ (see table \ref{bak}).  For this, we adapt the technique that Lueker introduced to lower bounds $\gamma_2$ \cite{Lueker2009May}. We emphasize the fact that our proof relies on adapting the ideas of \cite{Lueker2009May}. The proof requires the computation of a large vector obtained by the iteration of a particular transformation. We also provide another implementation of the technique used by Lueker for $\gamma_k$ (which is also what was recently done in \cite{Heineman2024Jul}). We improve the bounds for all considered values of $\gamma_k$ other than $\gamma_2$ and $\gamma_4$ (see table \ref{bgk}).

\begin{table}[h]\begin{subtable}{0.48\linewidth}
    \footnotesize
    \begin{tabular}{cccc}\hline
        & \multicolumn{2}{c}{Upper bounds}  & Lower bounds\\
 k & Our results & from \cite{Bilardi2022Nov} & from \cite{Bilardi2022Nov}\\\hline\hline
 2 &\bfseries 0.315514 & 0.36932 & 0.17372\cite{Lueker2009May}\\\hline
 3 &\bfseries 0.47276 & 0.53426 & 0.28366\\\hline
 4 &\bfseries 0.56578 & 0.63182 & 0.35978\\\hline
 5 &\bfseries 0.6325 & 0.70197 & 0.41517\\\hline
 6 &\bfseries 0.68424 & 0.75149 & 0.45776\\\hline
 7 &\bfseries 0.72016 & 0.79031 & 0.49183\\\hline
 8 &\bfseries 0.74896 & 0.81166 & 0.51990\\\hline
 9 &\bfseries 0.77264 &  & \\\hline 
 10 &\bfseries 0.7925 &  &\\\hline
 11 &\bfseries 0.8095 &  &\\\hline
 12 &\bfseries 0.82432 &  &\\\hline
 13 &\bfseries 0.83744 &  &\\\hline
 14 &\bfseries 0.84646 &  &\\\hline
 15 &\bfseries 0.85608 &  &\\\hline
 16 &\bfseries 0.86462 & 0.89554 & 0.64475\\\hline
 17 &\bfseries 0.87228 &  &\\\hline
 18 &\bfseries 0.87916 &  &\\\hline
 19 &\bfseries 0.88536 &  &\\\hline
 20 &\bfseries 0.89102 &  &\\\hline
 21 &\bfseries 0.89614 &   &\\\hline
 22 &\bfseries 0.90084 &  &\\\hline
 23 &\bfseries 0.90514 &  &\\\hline
 24 &\bfseries 0.90912 &  &\\\hline
 25 &\bfseries 0.91278 &  &\\\hline
 32 &\bfseries 0.93228 & 0.96588 & 0.73867\\\hline
 100 &\bfseries 0.97946 & &\\\hline
 100 &\bfseries 0.97982 & &\\\hline
    \end{tabular}
    \caption{New upper bounds on  $\alpha_k$}\label{bak}
\end{subtable}
\begin{subtable}{0.55\linewidth}
    \footnotesize
    \begin{tabular}{cccc}\hline
         & \multicolumn{2}{c}{Lower bounds}  & Upper bounds\\
 k & Our results & Previous best  & from \cite{Dancik1994}\\\hline\hline
 2 & 0.789872& \bfseries 0.792665 \cite{Heineman2024Jul} & 0.8263\cite{Lueker2009May}\\\hline
 3 & \bfseries 0.68422 & 0.682218 \cite{Heineman2024Jul}&0.76581\\\hline
 4 & 0.61422 & \bfseries 0.614333 \cite{Heineman2024Jul}&0.70824\\\hline
 5 & \bfseries 0.56206 & 0.549817 \cite{Heineman2024Jul}&0.66443\\\hline
 6 & \bfseries 0.51850 &0.4992299 \cite{Heineman2024Jul}&0.62932\\\hline
 7 & \bfseries 0.48712 & 0.466481 \cite{Heineman2024Jul}&0.60019\\\hline
 8 & \bfseries 0.46074 & 0.438799 \cite{Heineman2024Jul}&0.57541\\\hline
 9 & \bfseries 0.43806 & 0.414876 \cite{Heineman2024Jul}&0.55394\\\hline
 10 & \bfseries 0.41826 & 0.393811 \cite{Heineman2024Jul}&0.53486\\\hline
 11 & \bfseries 0.40072 & 0.37196 \cite{Dancik1994}&0.51785\\\hline
 12 & \bfseries 0.38504 & 0.35899 \cite{Dancik1994}&0.50260\\\hline
 13 & \bfseries 0.37088 & 0.34737 \cite{Dancik1994}&0.48880\\\hline
 14 & \bfseries 0.35798 & 0.33687 \cite{Dancik1994}&0.47620\\\hline
 15 & \bfseries 0.34616 & 0.32732 \cite{Dancik1994}&0.46462\\\hline
 16 & \bfseries 0.33528 & & \\\hline
 17 & \bfseries 0.32518 & & \\\hline
 18 & \bfseries 0.31580 & & \\\hline
 19 & \bfseries 0.30702 & & \\\hline
 20 & \bfseries 0.29880 & & \\\hline
 21 & \bfseries 0.29106 & & \\\hline
 22 & \bfseries 0.28376 & & \\\hline
 23 & \bfseries 0.27686 & & \\\hline
 24 & \bfseries 0.27032 & &  \\\hline
 25 & \bfseries 0.26412 & & \\\hline
 50 & \bfseries 0.16930 & & \\\hline
 100 & \bfseries 0.0991 & & \\\hline
 1000 &\bfseries 0.01164 & & \\\hline
    \end{tabular}
    \caption{New lower bounds on $\gamma_k$}\label{bgk}
\end{subtable}
\caption{A summary of the bounds provided in this article. We indicate in bold the best results.}
\end{table}

\section{Average edit distance}

For any strings $u,v$, $d_e(u,v)$ is the \emph{edit distance} (or \emph{Levenshtein distance}) between $u$ and $v$, that is, the minimum number of substitutions, deletions and insertions necessary to transform $u$ to $v$.
Given two strings $u,v\in \A^*$, and letters $a,b\in \A$,
we have 
\begin{equation}
d_e(au,bv)=
\begin{cases} 
 d_e(u,v) & \text{ if } a=b \\
    1+\min(d_e(u,v),d_e(u,bv),d_e(au,v)) & \text{ otherwise }
 \end{cases}
\end{equation}
and for all $u$, $d_e(u,\varepsilon)=d_e(\varepsilon,u)=|u|$, where $\varepsilon$ is the empty string.
For any strings $u,v,u',v'$, we have 
\begin{equation}\label{decomposition}
 d_e(uv,u'v')\le d_e(u,u')+d_e(v,v')\,.
\end{equation} 

For any $n$, we let $U_{n,k}= d_e(X_1\ldots X_n, Y_1\ldots, Y_n)$
where the $X_i$ and $Y_i$ are taken uniformly at random from $\A=\{1,\ldots, k\}$. 
For all $k$, the quantity $\alpha_k$ is defined as
\begin{equation*}
    \alpha_k = \lim_{n\rightarrow\infty}\frac{\E{d_e(X_1\ldots X_n, Y_1\ldots Y_n)}}{n}= \lim_{n\rightarrow\infty}\frac{U_{n,k}}{n}\,.
\end{equation*}
Linearity of expectation and equation \eqref{decomposition} imply that the sequence $\left(\E{U_{n,k}}\right)_{n\ge1}$ is subadditive. Thus by Feketes's lemma, $\alpha_k$ is well-defined. 

Since we will always be working with one specific ambient alphabet $\A$, we omit the $k$, and, from now on, we write for instance $U_n$ instead of $U_{n,k}$. Similarly, in the following the $X_i$ and $Y_i$ always denote random variables taken uniformly at random from $\A=\{1,\ldots, k\}$. 
For all $1\le i,j,\ell,m \le n$, we let 
\begin{equation*}
 U_{[i,j],[\ell,m]} = d_e(X_i\ldots X_j, Y_\ell\ldots, Y_m)\,.
\end{equation*}    
Finally, we let $V_n=\min\limits_{i\in\{0,\ldots, n\}}U_{[1,i],[1,n-i]}$ and
\begin{equation*}
    \alpha'_k=\lim_{n\rightarrow\infty}\frac{\E{V_n}}{n}\,.
\end{equation*}
We will show that $\alpha_k = 2\alpha'_k$, and then explain how we compute bounds on $\alpha'_k$.

\subsection{From \texorpdfstring{$\alpha_k$}{} to \texorpdfstring{$\alpha'_k$}{}}
We first prove that $\alpha_k = 2\alpha'_k$. The proof is directly inspired by the similar proof for $\gamma_k$ in \cite{10.1214/aoap/1177004903}.
We will use the following special case of McDiarmid's inequality \cite{BibEntry1989Aug}.
\begin{theorem}[McDiarmid's inequality]
Let $f: \mathcal{X}_1\times \mathcal{X}_2\times\cdots\times \mathcal{X}_m\rightarrow\mathbb{R}$ be a function with the property that changing any one argument of $f$ while holding the others fixed changes the value of $f$ by at most $c$. 
Consider independent random variables $X_1,\ldots, X_m$ where $X_i\in\mathcal{X}_i$ for all $i$. Then, for any $\varepsilon>0$,
\begin{equation*}
 \mathbb{P}\left[f(X_1,\ldots, X_m)-\E{f(X_1,\ldots, X_m)}\ge\varepsilon\right]\le \operatorname{exp}\left(-\frac{2\varepsilon^2}{mc^2}\right)\,, 
\end{equation*}
and
\begin{equation*}
 \mathbb{P}\left[f(X_1,\ldots, X_m)-\E{f(X_1,\ldots, X_m)}\le-\varepsilon\right]\le \operatorname{exp}\left(-\frac{2\varepsilon^2}{mc^2}\right)\,.
\end{equation*}
\end{theorem}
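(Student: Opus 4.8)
This is the classical bounded-differences argument, obtained through a martingale together with the Azuma--Hoeffding inequality; since it is standard I only sketch it. The plan is to fix the ordering $1,\ldots,m$ of the coordinates and introduce the Doob martingale
\begin{equation*}
Z_i=\E{f(X_1,\ldots,X_m)\mid X_1,\ldots,X_i}\,,\qquad 0\le i\le m\,,
\end{equation*}
so that $Z_0=\E{f(X_1,\ldots,X_m)}$ is deterministic and $Z_m=f(X_1,\ldots,X_m)$. Writing $D_i=Z_i-Z_{i-1}$, the tower property gives $\E{D_i\mid X_1,\ldots,X_{i-1}}=0$, and the deviation of interest telescopes as $f(X_1,\ldots,X_m)-\E{f(X_1,\ldots,X_m)}=\sum_{i=1}^m D_i$, so it suffices to control the $D_i$.

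The step I expect to be the crux is bounding the conditional range of each $D_i$. Fixing $X_1=x_1,\ldots,X_{i-1}=x_{i-1}$ and setting $g(y)=\E{f(x_1,\ldots,x_{i-1},y,X_{i+1},\ldots,X_m)}$, one has $Z_i=g(X_i)$ and $Z_{i-1}=\E{g(X_i)}$ on this event, so $D_i$ lies in an interval of length $\max_y g(y)-\min_y g(y)$. For every fixed $y,y'$ and every completion $z_{i+1},\ldots,z_m$ the bounded-differences hypothesis gives $|f(x_1,\ldots,x_{i-1},y,z_{i+1},\ldots,z_m)-f(x_1,\ldots,x_{i-1},y',z_{i+1},\ldots,z_m)|\le c$, and taking the expectation over $X_{i+1},\ldots,X_m$ yields $|g(y)-g(y')|\le c$; hence the conditional range of $D_i$ is at most $c$.

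It then remains to assemble the standard pieces. Hoeffding's lemma applied conditionally gives $\E{e^{\lambda D_i}\mid X_1,\ldots,X_{i-1}}\le e^{\lambda^2c^2/8}$ for every $\lambda>0$, and peeling the coordinates off one at a time (each $D_j$ with $j<i$ being measurable with respect to $X_1,\ldots,X_{i-1}$) gives $\E{e^{\lambda\sum_{i=1}^m D_i}}\le e^{\lambda^2mc^2/8}$. A Chernoff bound $\mathbb{P}\left[\sum_i D_i\ge\varepsilon\right]\le e^{-\lambda\varepsilon}\,e^{\lambda^2mc^2/8}$, optimised at $\lambda=4\varepsilon/(mc^2)$, produces the claimed upper-tail bound $\exp(-2\varepsilon^2/(mc^2))$; the lower-tail inequality follows by applying the same reasoning to $-f$, which satisfies the bounded-differences condition with the same constant $c$.
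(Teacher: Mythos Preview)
Your sketch is correct and follows the standard Doob-martingale plus Azuma--Hoeffding route that is the usual proof of this inequality. Note, however, that the paper does not prove this statement at all: it is quoted as a known special case of McDiarmid's inequality with a citation, and is used as a black box to derive the concentration bounds for $U_n$ and $V_n$. So there is no ``paper's own proof'' to compare against; your argument simply supplies the omitted (classical) justification.
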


We can apply this theorem to both $V_n$ and $U_n$ with $m=2n$, $c=1$ and well-chosen values of $\varepsilon$ that are going to be useful in what follows.
\begin{lemma}
For all $n$,
\begin{equation}\label{concVn}
 \mathbb{P}\left[V_n\le\E{V_n}+\sqrt{n\log 2}\right]\ge \frac{1}{2}\,,
\end{equation}
and
\begin{equation}\label{concUn}
\mathbb{P}\left[U_n\ge\E{U_n}+\sqrt{n\log (4(n+1)^2)}\right]\le \frac{1}{4(n+1)^2}\,.
\end{equation}
\end{lemma}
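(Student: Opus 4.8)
The plan is to apply McDiarmid's inequality twice, once to $V_n$ and once to $U_n$, viewed as functions of the $2n$ independent random variables $X_1,\ldots,X_n,Y_1,\ldots,Y_n$. The first task is to check the bounded-difference hypothesis with constant $c=1$. For $U_n = d_e(X_1\ldots X_n, Y_1\ldots Y_n)$, changing a single $X_i$ (or $Y_j$) alters at most one substitution in an optimal alignment, so the edit distance changes by at most $1$; more carefully, if $w$ and $w'$ differ in exactly one position then $d_e(w,v)$ and $d_e(w',v)$ differ by at most $1$ by the triangle inequality $d_e(w,v)\le d_e(w,w')+d_e(w',v)$ together with $d_e(w,w')\le 1$. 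For $V_n=\min_i U_{[1,i],[1,n-i]}$, a minimum of functions each of which has bounded differences $\le 1$ also has bounded differences $\le 1$ (since $|\min_i a_i - \min_i b_i|\le \max_i|a_i-b_i|$), so the same constant works. Hence both hypotheses hold with $m=2n$, $c=1$.

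Next I would simply plug in the appropriate $\varepsilon$ values. For \eqref{concVn}, apply the lower-tail bound to $V_n$ — wait, we want an upper bound on $V_n$, so apply the upper-tail form: with $\varepsilon=\sqrt{n\log 2}$ we get $\mathbb{P}[V_n - \E{V_n}\ge \sqrt{n\log 2}]\le \exp(-2n\log 2/(2n)) = \exp(-\log 2) = 1/2$, which rearranges to $\mathbb{P}[V_n\le \E{V_n}+\sqrt{n\log 2}]\ge 1/2$. For \eqref{concUn}, apply the upper-tail bound to $U_n$ with $\varepsilon=\sqrt{n\log(4(n+1)^2)}$, giving $\mathbb{P}[U_n - \E{U_n}\ge\varepsilon]\le \exp(-2\varepsilon^2/(2n)) = \exp(-\log(4(n+1)^2)) = 1/(4(n+1)^2)$, which is exactly the claimed inequality.

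Essentially there is no real obstacle here: the lemma is a direct instantiation of the stated theorem once the Lipschitz-with-constant-$1$ property is verified. The only point deserving a line of care is the bounded-difference property for $V_n$, since $V_n$ is defined as a minimum over the split point $i$; but this follows from the elementary fact that a pointwise minimum of functions each with bounded differences at most $c$ itself has bounded differences at most $c$. So the write-up will be: (i) state and justify the bounded-difference property for $d_e$ under a single-coordinate change, via the triangle inequality for edit distance; (ii) extend it to $V_n$ via the min-stability remark; (iii) substitute the two choices of $\varepsilon$ into McDiarmid and simplify the exponentials. I expect the ``hard part'' — really just the only part needing thought — to be making the $V_n$ bounded-difference claim airtight, in particular noting that changing one coordinate of $X$ or $Y$ changes every term $U_{[1,i],[1,n-i]}$ by at most $1$ and hence their minimum by at most $1$.
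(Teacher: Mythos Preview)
Your proposal is correct and matches exactly what the paper does: it simply states that the lemma follows by applying McDiarmid's inequality to $V_n$ and $U_n$ with $m=2n$, $c=1$, and the indicated choices of $\varepsilon$. If anything, you give more detail than the paper, which leaves the bounded-difference verification and the arithmetic implicit.
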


\begin{lemma}
 For all $k$,
    $$\alpha_k= 2\alpha'_k\,.$$
\end{lemma}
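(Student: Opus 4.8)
The plan is to prove the two inequalities $\alpha_k \le 2\alpha'_k$ and $\alpha_k \ge 2\alpha'_k$ separately, following the Alexander/Chvátal–Sankoff-style argument referenced in the excerpt.

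For the easy direction, $\alpha_k \le 2\alpha'_k$, I would use superadditivity-type reasoning on $V_n$ together with subadditivity of $\E{U_n}$. The key observation is that for any split of $X_1\ldots X_{2n}$ and $Y_1\ldots Y_{2n}$ into two halves, the decomposition inequality \eqref{decomposition} gives $U_{2n} \le U_{[1,n],[1,n]}' + U_{[n+1,2n],[n+1,2n]}'$ for an \emph{optimal} alignment; but more to the point, $V_{2n} \le U_{[1,i],[1,2n-i]}$ for the optimal $i$, and by concatenating two independent copies one shows $\E{V_{2n}} \ge$ something like $2\E{V_n} - o(n)$ or directly that $\E{V_n}/n \to \alpha'_k$ with $\alpha'_k \le \alpha_k/2$ via: $V_{2n}$ realized by some split point $i$ near $n$, and $U_{[1,i],[1,2n-i]} \le U_{[1,n],[1,n]} + d_e(X_{n+1}\ldots X_i, Y_{n+1}\ldots Y_{2n-i})$, the second term being at most $|n-i| + |n-i|$. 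Taking expectations and optimizing yields $\alpha'_k \le \alpha_k/2$. Conversely $U_n \le$ twice a min over split points of a shorter instance is \emph{not} true directly, so the real content is the other direction.

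For the hard direction, $\alpha_k \ge 2\alpha'_k$, I would exploit that an optimal edit-distance alignment of $X_1\ldots X_n$ with $Y_1\ldots Y_n$ passes through some ``midpoint'': there is an index $p$ such that $U_n = U_{[1,p],[1,q]} + U_{[p+1,n],[q+1,n]}$ where the alignment sends the prefix $X_1\ldots X_p$ to $Y_1\ldots Y_q$. Since the alignment covers all of $Y$, one of the two halves involves a $Y$-block of length $\le n/2$ paired against an $X$-block whose length differs by at most... no — instead, condition on the alignment hitting row $n/2$ of the dynamic-programming grid: there is $p$ with $U_n = U_{[1,p],[1,n/2]} + U_{[p+1,n],[n/2+1,n]}$ for some $0\le p\le n$. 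By symmetry and independence, both summands are distributed as $U_{[1,p],[1,n/2]}$ for a \emph{worst-case} $p$, hence each is $\ge \min_p U_{[1,p],[1,n/2]} = V'$ — but this is a min over a fixed length-$n/2$ second string, not quite $V_n$. I would patch this by noting $V_{n/2} = \min_i U_{[1,i],[1,n/2-i]}$ has total length $n/2$, whereas here the second half has length $n/2$ fixed and the first half has length $p$ free; by a change of variables (pad or use the triangle inequality to relate $U_{[1,p],[1,n/2]}$ to $U_{[1,p'],[1,n/2-(p-p')]}$ losing only $|p - p'|$) one gets $U_{[1,p],[1,n/2]} \ge V_{n} - $ (small correction), and then $\E{U_n} \ge 2\E{V_{?}} - o(n)$.

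The cleanest route, and the one I would actually write, is the probabilistic one using the lemma already proved: show $\E{U_{2n}} \ge 2\E{V_n} - O(\sqrt{n\log n})$ by the following coupling. Take $X_1\ldots X_{2n}$, $Y_1\ldots Y_{2n}$; an optimal alignment for $U_{2n}$ crosses the horizontal line after $X_n$ at some column $j$, giving $U_{2n} = U_{[1,n],[1,j]} + U_{[n+1,2n],[j+1,2n]}$. Now $U_{[1,n],[1,j]} \ge V_{n+j}$ is false in general, but $U_{[1,n],[1,j]} \ge \min_{i} U_{[1,i],[1,(n+j)-i]}$ would need the right total length; instead use $U_{[1,n],[1,j]} \ge d_e$ of a length-$n$ and length-$j$ string $\ge \big|\,n-j\,\big|$ is too weak. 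So I would instead invoke the standard fact that $U_{[1,n],[1,j]}$ stochastically dominates $V_{\max(n,j)}$ up to the triangle-inequality slack $|n-j|$, combine the two halves, use concentration \eqref{concVn} to replace $\E{V_n}$-type quantities by high-probability lower bounds, use \eqref{concUn} to control $U_{2n}$ from above with a union bound over the at most $2n+1$ possible split columns $j$, and conclude $\E{U_{2n}}/(2n) \ge \E{V_n}/n - o(1)$, i.e. $\alpha_k \ge 2\alpha'_k$. Taking $n\to\infty$ in both directions gives $\alpha_k = 2\alpha'_k$.

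The main obstacle, and the place where McDiarmid's inequality is genuinely needed rather than just plain linearity of expectation, is the lower-bound direction: the split column $j$ is itself a random variable depending on the whole input, so one cannot simply take expectations inside ``$U_{2n} = U_{[1,n],[1,j]} + \cdots$''. The fix is the union-bound-over-all-$j$ argument powered by \eqref{concUn} (the $\frac{1}{4(n+1)^2}$ bound is exactly tuned so that after multiplying by the $\le 2n+1$ choices of split point it stays below, say, $\tfrac12$), together with \eqref{concVn} to ensure that with probability $\ge \tfrac12$ each independent half exceeds $\E{V_n} - \sqrt{n\log 2}$; intersecting these events forces a contradiction unless $\E{U_{2n}}$ is at least $2\E{V_n} - O(\sqrt{n\log n})$. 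I expect the bookkeeping of which length index the half-instances carry (they have a random length $j$ resp. $2n-j$, not exactly $n$) to be the fiddly part, resolved by the subadditivity/superadditivity of the relevant sequences and the $o(n)$ error terms washing out in the limit.
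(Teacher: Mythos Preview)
Your proposal has a genuine gap: you have the two directions confused, and as a result you never actually address the hard inequality $\alpha_k \le 2\alpha'_k$.

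Look at what each of your paragraphs actually establishes. In your ``easy direction'' paragraph you label the target as $\alpha_k \le 2\alpha'_k$, but the argument you sketch (compare $V_{2n}$ to $U_{[1,n],[1,n]}$ via a fixed split) concludes $\alpha'_k \le \alpha_k/2$, i.e.\ $\alpha_k \ge 2\alpha'_k$. That is indeed the easy direction, and it is exactly what the paper does in one line: $\E{V_n}\le \E{U_{[1,\lceil n/2\rceil],[1,\lfloor n/2\rfloor]}}$ and take limits. Then in your ``hard direction'' and ``cleanest route'' paragraphs you set as the goal $\E{U_{2n}} \ge 2\E{V_n} - O(\sqrt{n\log n})$. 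But that inequality again yields $\alpha_k \ge 2\alpha'_k$, the same easy direction. Your optimal-alignment decomposition $U_{2n} = U_{[1,n],[1,j]} + U_{[n+1,2n],[j+1,2n]}$ can only \emph{lower}-bound $U_{2n}$ by $V$-type quantities (each piece is $\ge$ the corresponding min over splits), so it is intrinsically aimed at $\alpha_k \ge 2\alpha'_k$; it cannot be turned into an upper bound on $U_n$.

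The inequality that actually requires the concentration lemma is $\E{U_n} \le 2\E{V_n} + o(n)$, and the paper's argument runs in the opposite logical direction from yours. From \eqref{concVn} and a pigeonhole over the $n+1$ splits, there exists a \emph{fixed} index $i$ with $\mathbb{P}\!\left[U_{[1,i],[1,n-i]} \le \E{V_n}+\sqrt{n\log 2}\right] \ge \tfrac{1}{2n}$. The second half $U_{[i+1,n],[n-i+1,n]}$ is independent of the first and, by symmetry of $d_e$, has the same distribution, so both halves are small simultaneously with probability $\ge \tfrac{1}{4n^2}$. Subadditivity \eqref{decomposition} then gives $\mathbb{P}\!\left[U_n \le 2\E{V_n}+2\sqrt{n\log 2}\right] \ge \tfrac{1}{4n^2}$. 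Finally \eqref{concUn} says $U_n$ falls below $\E{U_n}-\sqrt{n\log(4(n+1)^2)}$ with probability strictly less than $\tfrac{1}{4n^2}$, forcing $\E{U_n} \le 2\E{V_n}+o(\,n\,)$. Note in particular that the union bound is over splits for $V_n$ (to find one good $i$), not over alignment crossing points of $U_n$; and the two concentration bounds are used with opposite tails from what you wrote.
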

\begin{proof}
 By definition, for all $n$, 
    \begin{align*}
 \frac{\E{V_n}}{n}
 \le\frac{\E{U_{[1,\lceil n/2\rceil],[1,\lfloor n/2\rfloor]}}}{n}
 \le\frac{\E{U_{[1,\lceil n/2\rceil],[1,\lceil n/2\rceil]}}}{\lceil n/2\rceil}\cdot\frac{\lceil n/2\rceil}{n}\,.
    \end{align*}
 The limit of the right-hand side as $n$ goes to infinity is $\frac{\alpha_k}{2}$ which implies $\alpha'_k\le\frac{\alpha_k}{2}$. 

 We now focus on proving  $\alpha'_k\ge\frac{\alpha_k}{2}$.
 We have from \eqref{concVn},
    \begin{equation*}
 \frac{1}{2}
 \le \mathbb{P}\left[V_n\le\E{V_n}+\sqrt{n\log2}\right]
 \le \sum_{i=1}^{n}\mathbb{P}\left[U_{[1,i],[1,n-i]}\le\E{V_n}+\sqrt{n\log2}\right]\,.
    \end{equation*}
 Hence, there exists $i$ such that
    \begin{equation*}
 \mathbb{P}\left[U_{[1,i],[1,n-i]}\le\E{V_n}+\sqrt{n\log2}\right]
 \ge \frac{1}{2n}\,.
    \end{equation*}
 Since $U_{[i+1,n],[n-i+1,n]}$ has the same distribution as
    $U_{[1,i],[1,n-i]}$, and they are independent of each others, we have
    \begin{align*}
 \frac{1}{4n^2}
 &\le\mathbb{P}\left[U_{[1,i],[1,n-i]}\le\E{V_n}+\sqrt{n\log2}\right]\cdot\mathbb{P}\left[U_{[i+1,n],[n-i+1,n]}\le\E{V_n}+\sqrt{n\log2}\right]\\
 &=\mathbb{P}\left[U_{[1,i],[1,n-i]}\le\E{V_n}+\sqrt{n\log2}\text{ and }U_{[i+1,n],[n-i+1,n]}\le\E{V_n}+\sqrt{n\log2}\right]\\
        &\le\mathbb{P}\left[U_{[1,n],[1,n]}\le2\E{V_n}+2\sqrt{n\log2}\right]
    \end{align*}
 From \eqref{concUn},
    \begin{equation*}
 \mathbb{P}\left[U_{n}\le\E{U_n}-\sqrt{n\log (4(n+1)^2)}\right]\le \frac{1}{4(n+1)^2}
 < \mathbb{P}\left[U_{n}\le2\E{V_n}+2\sqrt{n\log2}\right]\,,
    \end{equation*}
 which implies
    \begin{equation*}
 \E{U_n}-\sqrt{n\log (4(n+1)^2)}\le2\E{V_n}+2\sqrt{n\log2}\,.
    \end{equation*}
 This implies 
    $\E{U_n}\le2\E{V_n} + o(n)$.
 Taking the limit as $n$ goes to infinity, we get
    $\alpha_k\le 2\alpha'_k$ which concludes the proof.
\end{proof}

\subsection{Bounds on \texorpdfstring{$\alpha'_k$}{}}
For any $n\in\mathbb{N}$ and, $s,t\in \mathcal{A}^*$, we let 
$$V_n(s,t)= \mathbb{E}\left[\min_{\substack{i+j=n\\i,j\ge0 }} d_e(sX_1X_2\ldots X_i, tY_1Y_2\ldots Y_j)\right]\,.$$
As $n$ goes to infinity, the impact of the fixed prefixes $s$ and $t$ is negligible and the following Lemma is immediate.
\begin{lemma}\label{equalitywithVn}
 For any fixed $s,t\in \mathcal{A}^*$, we have 
    $$\alpha'_k = \lim\limits_{n\rightarrow\infty}\frac{V_n(s,t)}{n}\,.$$
\end{lemma}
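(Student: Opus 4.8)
The plan is to compare $V_n(s,t)$ directly with $V_n(\varepsilon,\varepsilon)$. By the definition of $V_n$ we have $V_n(\varepsilon,\varepsilon)=\E{V_n}$, so $\lim_{n\to\infty}V_n(\varepsilon,\varepsilon)/n=\alpha'_k$; it therefore suffices to prove that $|V_n(s,t)-V_n(\varepsilon,\varepsilon)|$ is bounded by a constant independent of $n$ (concretely, by $|s|+|t|$). Dividing by $n$ and letting $n\to\infty$ then yields both equalities at once.

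For the inequality $V_n(s,t)\le V_n(\varepsilon,\varepsilon)+|s|+|t|$, I would fix an arbitrary outcome of the $X_i,Y_i$ together with an arbitrary split $i+j=n$, write $u=X_1\cdots X_i$ and $v=Y_1\cdots Y_j$, and apply \eqref{decomposition}:
\[
d_e(su,tv)\le d_e(s,t)+d_e(u,v)\le |s|+|t|+d_e(u,v),
\]
using $d_e(s,t)\le|s|+|t|$. Taking the minimum over all splits $i+j=n$ and then the expectation yields the bound.

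For the matching inequality $V_n(\varepsilon,\varepsilon)\le V_n(s,t)+|s|+|t|$, I would, with the same notation, use that $d_e$ is a metric (triangle inequality applied twice) together with the trivial estimates $d_e(u,su)\le|s|$ and $d_e(tv,v)\le|t|$ (delete, respectively insert, the prefix) to get
\[
d_e(u,v)\le d_e(u,su)+d_e(su,tv)+d_e(tv,v)\le |s|+d_e(su,tv)+|t|.
\]
Again taking the minimum over splits and then the expectation finishes the argument.

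There is essentially no genuine obstacle here; the only points requiring a little care are that the minimum over the split $i+j=n$ must be taken before passing to expectations in both inequalities, and that in the second inequality the prefixes must be removed on the correct sides so that the cheap ``delete/insert a prefix'' bounds apply. Since the additive error $|s|+|t|$ does not depend on $n$, it is killed by the normalization, so $V_n(s,t)/n$ and $V_n(\varepsilon,\varepsilon)/n$ share the same limit $\alpha'_k$.
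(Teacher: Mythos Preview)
Your proof is correct and is exactly the elaboration the paper has in mind: the paper does not give a detailed argument but simply declares the lemma ``immediate'' on the grounds that the fixed prefixes $s,t$ have negligible impact as $n\to\infty$. Your bound $|V_n(s,t)-V_n(\varepsilon,\varepsilon)|\le |s|+|t|$, obtained via \eqref{decomposition} and the triangle inequality, is precisely the quantitative content behind that remark.
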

We are now ready to bound $\alpha'_k$ using this second expression.
We let $\vec{V_n}$ be the vector whose components are the $V_n(s,t)$ where $s$ and $t$ range over all strings of length $h$ for a given $h$. Using the dynamic programming given earlier we provide upper bounds on the components of $\vec{V_n}$ based on $\vec{V_{n-1}} $ and $\vec{V_{n-2}}$.

\begin{fact}
 Let $a\in \mathcal{A}$ and $s,t\in \mathcal{A}^*$, then
    \begin{equation}\label{recSameLetter}
 V_n(as,at) \le\frac{1}{|\mathcal{A}|^2}\sum_{c,c'\in \mathcal{A}}V_{n-2}(sc,tc')\,.
    \end{equation}
\end{fact}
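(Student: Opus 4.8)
The plan is to peel off the common leading letter $a$ and then re-anchor the dynamic program on the first pair of free letters it consumes. Starting from the definition
\[
V_n(as,at) = \E{\min_{\substack{i+j=n\\ i,j\ge0}} d_e(as X_1\ldots X_i,\, at Y_1\ldots Y_j)}\,,
\]
I would first replace the index set of the minimum by the smaller set $\{(i,j):i+j=n,\ i,j\ge1\}$: restricting the domain of a minimum can only increase it, so $V_n(as,at)$ is at most the expectation of the minimum over this restricted set (this is where we give up equality and only get an inequality, and it is harmless for $n\ge2$). For every $(i,j)$ in the restricted set, both arguments of $d_e$ begin with the letter $a$, so the first case of the recursion for $d_e$ gives $d_e(asX_1\ldots X_i,\,atY_1\ldots Y_j)=d_e(sX_1\ldots X_i,\,tY_1\ldots Y_j)$, and the leading $a$ is removed at no cost.

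Next I would perform the index shift $i'=i-1$, $j'=j-1$, so that $i',j'\ge0$ and $i'+j'=n-2$ now range over \emph{all} such pairs, and rewrite $sX_1\ldots X_i=(sX_1)X_2\ldots X_{i'+1}$ and symmetrically for $t$. The minimum becomes $\min_{i'+j'=n-2} d_e\left((sX_1)X_2\ldots X_{i'+1},\,(tY_1)Y_2\ldots Y_{j'+1}\right)$, which is exactly the expression defining $V_{n-2}$ except that the two fixed prefixes have been replaced by the random strings $sX_1$ and $tY_1$. Conditioning on the event $X_1=c$, $Y_1=c'$ --- which by independence and uniformity has probability $1/|\mathcal{A}|^2$ for each pair $(c,c')\in\mathcal{A}^2$ --- and using that $X_2,X_3,\ldots$ together with $Y_2,Y_3,\ldots$ remain i.i.d.\ uniform and independent of $(X_1,Y_1)$, each conditional expectation equals $V_{n-2}(sc,tc')$ after relabelling indices. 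Summing over $(c,c')$ with the weights $1/|\mathcal{A}|^2$ (the law of total expectation) yields \eqref{recSameLetter}.

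I expect the only delicate point to be bookkeeping rather than anything substantive: one must check that after the shift the restricted range $\{i,j\ge1\}$ matches \emph{exactly} the full range $\{i',j'\ge0\}$ appearing in the definition of $V_{n-2}$, so that nothing is discarded beyond the intended restriction, and keep in mind that the statement is only meaningful for $n\ge2$. The two genuine ingredients --- cancellation of the shared first letter via the recursion for $d_e$, and the conditioning on $(X_1,Y_1)$ --- are immediate from material already established.
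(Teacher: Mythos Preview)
Your proposal is correct and follows essentially the same route as the paper's proof: cancel the common leading letter via the edit-distance recursion, restrict the minimum to $i,j\ge1$ (the single inequality step), condition on $(X_1,Y_1)$, and shift indices to recognize $V_{n-2}$. The only cosmetic difference is that you restrict to $i,j\ge1$ before cancelling the shared $a$, whereas the paper cancels first (an equality valid for all $i,j$) and restricts afterward; the order is immaterial.
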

\begin{proof}
 We have
    \begin{align*}
 V_n(as,at) 
    &= \mathbb{E}\left[\min_{\substack{i+j=n\\i,j\ge0 }} d_e(asx_1x_2\ldots x_i, aty_1y_2\ldots y_j)\right]\\
    &= \mathbb{E}\left[\min_{\substack{i+j=n\\i,j\ge0 }} d_e(sx_1x_2\ldots x_i, ty_1y_2\ldots y_j)\right]\\
    &\le \mathbb{E}\left[\min_{\substack{i+j=n\\i,j\ge1 }} d_e(sx_1x_2\ldots x_i, ty_1y_2\ldots y_j)\right]\\
    &= \frac{1}{|\mathcal{A}|^2}\sum_{c,c'\in\mathcal{A}}\mathbb{E}\left[ \min_{\substack{i+j=n\\i,j\ge1 }} d_e(scx_2\ldots x_i, tc'y_2\ldots y_j)\right]\\
    &= \frac{1}{|\mathcal{A}|^2}\sum_{c,c'\in\mathcal{A}}\mathbb{E}\left[ \min_{\substack{i+j=n-2\\i,j\ge0 }} d_e(scx_1\ldots x_i, tc'y_1\ldots y_j)\right]\\
    &= \frac{1}{|\mathcal{A}|^2}\sum_{c,c'\in \mathcal{A}}V_{n-2}(sc,tc')\,,
    \end{align*}
 as desired.
\end{proof}

\begin{fact}
 Let $a,b\in \mathcal{A}$ such that $a\not=b$ and $s,t\in \mathcal{A}^*$, then
    \begin{align*}\label{recDiffLetter}
 V_n(as,bt)
    &\le 
    1+\begin{cases} 
 \frac{1}{|\mathcal{A}|}\sum_{c\in\mathcal{A}}V_{n-1}(sc,bt)  \\
 \frac{1}{|\mathcal{A}|}\sum_{c\in\mathcal{A}}V_{n-1}(as,tc) \\
 \frac{1}{|\mathcal{A}|^2}\sum_{c,c'\in\mathcal{A}}V_{n-2}(sc,tc')
    \end{cases}\,.
    \end{align*}
\end{fact}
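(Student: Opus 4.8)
The plan is to reuse the template of the preceding Fact almost verbatim, the only change being that, because $a\neq b$, we now invoke the ``otherwise'' branch of the edit-distance recursion. That branch yields the three pointwise inequalities
$$d_e(au,bv)\le 1+d_e(u,v),\qquad d_e(au,bv)\le 1+d_e(u,bv),\qquad d_e(au,bv)\le 1+d_e(au,v),$$
valid for all words $u,v$ and corresponding respectively to substituting $a$ by $b$, deleting the leading $a$, and inserting $b$ at the front of the second word. I will prove each of the three asserted bounds separately; since all three hold, $V_n(as,bt)$ is at most $1$ plus the minimum of the three right-hand sides, which is the claim.

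For the bound with $V_{n-2}$, I would apply $d_e(au,bv)\le 1+d_e(u,v)$ with $u=sx_1\cdots x_i$ and $v=ty_1\cdots y_j$ inside the expectation defining $V_n(as,bt)$, obtaining $V_n(as,bt)\le 1+\E{\min_{i+j=n,\ i,j\ge0}d_e(sx_1\cdots x_i,\,ty_1\cdots y_j)}$. Restricting the inner minimum to $i,j\ge1$ only increases it, so it suffices to treat that restricted minimum; conditioning on the values $x_1=c$ and $y_1=c'$, rewriting $sx_1\cdots x_i=(sc)x_2\cdots x_i$ and $ty_1\cdots y_j=(tc')y_2\cdots y_j$, reindexing via $i'=i-1,\ j'=j-1$ (so $i'+j'=n-2$ with $i',j'\ge0$), and relabelling the remaining i.i.d.\ letters, each conditional expectation is exactly $V_{n-2}(sc,tc')$. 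Averaging over $c,c'\in\mathcal{A}$ gives $V_n(as,bt)\le 1+\frac{1}{|\mathcal{A}|^2}\sum_{c,c'\in\mathcal{A}}V_{n-2}(sc,tc')$.

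The remaining two bounds follow by the same argument but with an asymmetric index restriction. Starting from $d_e(au,bv)\le 1+d_e(u,bv)$ (delete the leading $a$, keep the prefix $bt$ intact) and restricting only $i\ge1$, then peeling off $x_1=c$ and reindexing $i'=i-1$ with $j$ unchanged (so $i'+j=n-1$), produces $\frac{1}{|\mathcal{A}|}\sum_{c\in\mathcal{A}}V_{n-1}(sc,bt)$; the symmetric choice $d_e(au,bv)\le 1+d_e(au,v)$ with the restriction $j\ge1$ and the peeling of $y_1=c$ produces $\frac{1}{|\mathcal{A}|}\sum_{c\in\mathcal{A}}V_{n-1}(as,tc)$.

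I do not anticipate a real obstacle: this is a direct adaptation of the previous Fact. The only delicate points are the bookkeeping of the index shifts together with the constraints $i,j\ge0$ versus $i,j\ge1$ (which must line up so that after shifting one recovers exactly the constraints in the definition of $V_{n-1}$ or $V_{n-2}$), and the degenerate small cases, e.g.\ $n=0$, or $n\le1$ for the $V_{n-2}$ bound, where the restricted minimum ranges over an empty set and equals $+\infty$ by convention, so that inequality holds trivially.
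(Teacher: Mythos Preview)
Your proposal is correct and follows essentially the same route as the paper: apply the $a\neq b$ branch of the edit-distance recursion pointwise inside the expectation, then for each of the three resulting terms restrict the index range ($i\ge1$, $j\ge1$, or both), condition on the revealed random letter(s), and reindex to recognise $V_{n-1}$ or $V_{n-2}$. The only cosmetic difference is that the paper keeps the three branches bundled as an inner $\min$ and uses $\E[\min]\le\min\E$ once, whereas you treat each branch separately; the content is the same.
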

\begin{proof}
 We have    
    \begin{align*}
 V_n(as,bt)
    &= \mathbb{E}\left[\min_{\substack{i+j=n\\i,j\ge0 }} d_e(asx_1x_2\ldots x_i, bty_1y_2\ldots y_j)\right]\\
    &= \mathbb{E}\left[\min_{\substack{i+j=n\\i,j\ge0 }}\left( 1+\min \begin{cases} 
 d_e(sx_1x_2\ldots x_i, bty_1y_2\ldots y_j)  \\
 d_e(asx_1x_2\ldots x_i, ty_1y_2\ldots y_j) \\
 d_e(sx_1x_2\ldots x_i, ty_1y_2\ldots y_j)
    \end{cases}\right)\right]\\
    &\le 1+\min \begin{cases}
 \mathbb{E}\left[\min_{\substack{i+j=n\\i,j\ge0 }}d_e(sx_1x_2\ldots x_i, bty_1y_2\ldots y_j)\right]  \\
 \mathbb{E}\left[\min_{\substack{i+j=n\\i,j\ge0 }}d_e(asx_1x_2\ldots x_i, ty_1y_2\ldots y_j)\right] \\
 \mathbb{E}\left[\min_{\substack{i+j=n\\i,j\ge0 }}d_e(sx_1x_2\ldots x_i, ty_1y_2\ldots y_j)\right]
    \end{cases}\,.
    \end{align*}
We upper bound the first term of this minimum as follows,
 \begin{align*}
 \mathbb{E}\left[\min_{\substack{i+j=n\\i,j\ge0 }}d_e(sx_1x_2\ldots x_i, bty_1y_2\ldots y_j)\right] 
    &\le\mathbb{E}\left[\min_{\substack{i+j=n\\i\ge1,j\ge0 }}d_e(sx_1x_2\ldots x_i, bty_1y_2\ldots y_j)\right]\\
    &=  \frac{1}{|\mathcal{A}|}\sum_{c\in\mathcal{A}}\mathbb{E}\left[\min_{\substack{i+j=n\\i\ge1,j\ge0 }}d_e(scx_2\ldots x_i, bty_1y_2\ldots y_j)\right]\\
    &=  \frac{1}{|\mathcal{A}|}\sum_{c\in\mathcal{A}}\mathbb{E}\left[\min_{\substack{i+j=n-1\\i\ge0,j\ge0 }}d_e(scx_1\ldots x_i, bty_1y_2\ldots y_j)\right]\\
    &=  \frac{1}{|\mathcal{A}|}\sum_{c\in\mathcal{A}}V_{n-1}(sc,bt)\,.
 \end{align*}
Similar computations for the two other terms of this minimum yields
\begin{equation*}
 V_n(as,bt)
 \le  1+ 
    \begin{cases} 
 \frac{1}{|\mathcal{A}|}\sum_{c\in\mathcal{A}}V_{n-1}(sc,bt)  \\
 \frac{1}{|\mathcal{A}|}\sum_{c\in\mathcal{A}}V_{n-1}(as,tc) \\
 \frac{1}{|\mathcal{A}|^2}\sum_{c,c'\in\mathcal{A}}V_{n-2}(sc,tc')
    \end{cases}\,,
\end{equation*}
 as desired.
\end{proof}

Let $T$ be the function given by these bounds such that \begin{equation}\label{boundOnVector}
 \vec{V_{n}} \le T(\vec{V_{n-1}},\vec{V_{n-2}})\,,
\end{equation}
where $\ge$ is taken component-wise. Moreover, $T$ is \textit{translationally invariant}, that is, for all real $r$, 
\begin{equation}\label{transInv}
 T(\vec{v}+r,\vec{v'}+r)=T(v,v')+r\,,
\end{equation}
where the sum of a vector with a real is taken component-wise. Finally, $T$ is also \emph{monotonic}, that is,
\begin{equation} \label{monotonic}
\vec{u}  \le\vec{u'} 
\text{ and }
\vec{v}  \le\vec{v'} 
\implies T(u,v) \le T(u',v')\,.
\end{equation}

We are now ready to prove the main Lemma for the computation of $\alpha'_k$.
\begin{lemma}
If there exists a vector $\vec{v}$ and a real $r$ such that
\begin{equation}\label{changeByR}
T(\vec{v},\vec{v}-r)\le \vec{v}+r\,,
\end{equation}
then there exists a real $c$ such that
\begin{equation}\label{rIsTheValue}
    \forall n\ge0, \vec{V_n} \le \vec{v}+nr+c\,.
\end{equation}
In particular, this implies that
$\alpha'_k\le r$.
\end{lemma}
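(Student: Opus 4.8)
\textit{Proof proposal.} The plan is to prove \eqref{rIsTheValue} by induction on $n$ and then read off $\alpha'_k\le r$. The two structural properties of $T$ are exactly what makes the induction go through: monotonicity \eqref{monotonic} lets us substitute the (larger) inductive bounds for $\vec{V_{n-1}}$ and $\vec{V_{n-2}}$ into the recursion \eqref{boundOnVector}, and translational invariance \eqref{transInv} lets us extract the affine term $nr+c$ so that the single hypothesis \eqref{changeByR} does all the remaining work. For the base cases $n=0$ and $n=1$ I would simply choose $c$ large enough: since $\vec{V_0}$ and $\vec{V_1}$ are fixed finite vectors, take $c$ to be the maximum over all coordinates of the entries of the two vectors $\vec{V_0}-\vec{v}$ and $\vec{V_1}-\vec{v}-r$, which guarantees $\vec{V_0}\le\vec{v}+c$ and $\vec{V_1}\le\vec{v}+r+c$ component-wise.

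For the inductive step, fix $n\ge2$ and assume \eqref{rIsTheValue} for $n-1$ and $n-2$. Writing $\rho=(n-1)r+c$, so that $\vec{v}+(n-1)r+c=\vec{v}+\rho$ and $\vec{v}+(n-2)r+c=(\vec{v}-r)+\rho$, we get
\begin{align*}
\vec{V_n}
&\le T(\vec{V_{n-1}},\vec{V_{n-2}})
\le T\bigl(\vec{v}+(n-1)r+c,\ \vec{v}+(n-2)r+c\bigr)\\
&= T(\vec{v},\vec{v}-r)+\rho
\le (\vec{v}+r)+\rho
= \vec{v}+nr+c\,,
\end{align*}
using \eqref{boundOnVector}, then \eqref{monotonic}, then \eqref{transInv}, and finally the hypothesis \eqref{changeByR}. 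This closes the induction, so \eqref{rIsTheValue} holds for all $n\ge0$.

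To conclude, fix an arbitrary pair $(s,t)$ of strings of length $h$, let $i$ be the coordinate of $\vec{V_n}$ with $(\vec{V_n})_i=V_n(s,t)$, divide the $i$-th coordinate of \eqref{rIsTheValue} by $n$, and let $n\to\infty$; Lemma~\ref{equalitywithVn} then gives $\alpha'_k=\lim_n V_n(s,t)/n\le\lim_n (v_i+nr+c)/n=r$. Inside this lemma the argument is purely formal once the properties of $T$ are available, so I do not anticipate a real obstacle here; the only point needing care is the bookkeeping in the translational-invariance step, where both arguments must be shifted by the same amount $\rho$ — this is precisely why the hypothesis is stated with a fixed gap $r$ between $\vec{v}$ and $\vec{v}-r$ rather than with two unrelated vectors. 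The genuinely hard part of the paper lies elsewhere, namely in exhibiting a concrete vector $\vec{v}$, a real $r$, and a prefix length $h$ for which \eqref{changeByR} can actually be verified.
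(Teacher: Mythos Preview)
Your proof is correct and follows essentially the same approach as the paper: choose $c$ large enough for the base cases $n\in\{0,1\}$, then induct using \eqref{boundOnVector}, monotonicity, translational invariance, and the hypothesis \eqref{changeByR} in that order. Your write-up is a bit more explicit about the choice of $c$ and the derivation of $\alpha'_k\le r$, but the argument is identical.
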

\begin{proof}
We take any $c$ large enough such that \eqref{rIsTheValue} holds for $n\in\{0,1\}$. We proceed by induction on $n$. The base case is covered by the choice of $c$. For the inductive step, assume that \eqref{rIsTheValue} holds for all $i< n$. 
Then using \eqref{boundOnVector}, the induction hypothesis, \eqref{monotonic}, \eqref{transInv} and \eqref{changeByR}, in this order, we obtain
\begin{align*}
 \vec{V_n}
    &\le T(\vec{V_{n-1}},\vec{V_{n-2}})\\
    &\le T(\vec{v}+(n-1)r+c,\vec{v}+(n-2)r+c)\\
    &\le T(\vec{v},\vec{v}-r)+(n-1)r+c\\
    &\le \vec{v}+r+(n-1)r+c= \vec{v}+nr+c\,,
\end{align*}
which concludes our induction.
\end{proof}

\subsection{Implementation and bounds}

If we can find a vector $\vec{v}$ and real $r$ satisfying \eqref{changeByR}, this implies $\alpha_k\le 2r$. We use a computer program to verify the existence of such a vector. 
To find $\vec{v}$ and $r$, we simply start with two vectors $\vec{w_0}=\vec{w_1}=0$, and we iterate $\vec{w_n}=T(\vec{w_{n-1}},\vec{w_{n-2}})$. It seems to be the case that $\vec{w_n}-\vec{w_{n-1}}$ converges to a uniform vector, but we do not need to prove it.
We simply take $\vec{v}=w_n$ for some $n$ large enough, and we take $r$ to be the largest component of $w_n-w_{n-1}+\varepsilon$. We then check that \eqref{changeByR} is satisfied with this choice of  $\vec{v}$ and $r$. 

The implementation relies on fixed point arithmetic (that is, we fix some positive integer $p$, and we work with numbers of the form $a/p$ where $a\in\mathbb{N}$). We do all the rounding in the appropriate direction and in particular for the final step, we can formally verify with exact computation that \eqref{changeByR} holds with the choice of $\vec{v}$ and $r$.

In the implementation\footnote{The code is available on \url{https://gite.lirmm.fr/mrosenfeld/expected-edit-distance}.}, every pair of strings $(u,v)$ is manipulated as the corresponding number $uv$ written in base $k$ (every considered pair is stored in a $64$ bits integers). We also use the fact that a permutation of the alphabet does not change the result. That is, for every function $f$ that permutes the letters of the alphabet and for all strings $u,v$ of length $h$, $V_n(u,v)= V_n(f(u),f(v))$. For every pair $u,v$, we consider the $f$ that minimizes the string $uv$ for the lexicographic order (and we call $f(uv)$ the \emph{normalization} of  $uv$). This allows us to divide the size of the considered vector by $\sim k!$. However, it can be costly to recompute the renaming of the strings, so we provide three different versions of the code that provide different optimizations that are better for different values of $k$.

\paragraph{The binary case.} A first version was written specifically for the binary alphabet. In this case, exchanging the letters of the string for the normalization is simply taking the bitwise negation of the string, and we can use bit-shifts and other bit manipulation to produce a faster code. In this code, the transformation $T$ is computed on the fly for every iteration, and we only store the vectors. The results obtained with this program are given in Table \ref{tableBinaryEdit}. The source code is in the file \path{EditDistance_binary_alphabet.cpp} from the GIT repository. This is the only version that uses parallelization (the optimal number of cores seems to be 2 as using more cores seems to increase considerably the number of cache misses).

We obtain $\alpha_2\le0.315514$. In \cite{Lueker2009May}, the author uses other symmetries that one might be able to adapt here to push the computations to $h=18$ instead of $h=17$ with the same machine (which, based on the difference between $h=16$ and $h=17$, would not improve the bound by much). Let us mention that the results obtained by Lueker have been very recently improved by optimizing the implementation and some of the techniques used might be applicable in our case as well \cite{Heineman2024Jul}.
\begin{table}[!h]
    \begin{center}
    \begin{tabular}{ |c|c|c|c|c|c| } 
        \hline
 h  & Upper bound & Memory consumption & Computation time\\\hline\hline
 14 & 0.319052 & 1.6Gb & 1m19s\\\hline
 15 & 0.317752 & 6.2Gb & 6m\\\hline
 16 & 0.31658 & 25Gb & 17m\\\hline
 17 & 0.315514 & 100Gb & 1h32m\\\hline
    \end{tabular}
    \end{center}
    \caption{Our results for the binary case. The number of iterations of $T$ before choosing  $\vec{v}=w_n$ is $n=150$. We used a laptop for $h\le15$ and a more powerful computer for $h>15$. All the times here (and in futur tables) are purely indicative and do not constitute a proper benchmarking.}\label{tableBinaryEdit}
\end{table}

\paragraph{The general case.} The second version of the code precomputes the transformation $T$ and stores it. In the general setting of an alphabet of size $k$ the normalization is more costly to compute, so we only compute it once for every pair of strings. Storing $T$ increases the memory consumption (by a factor of order $k$), but saves a lot of computation time.
The results obtained with this program are given in Table \ref{tableSmallAlphabetsEdit}, and the source code is in the file \path{EditDistance_general_case.cpp} from the GIT repository.
\begin{table}[!h]
    \begin{center}
    \begin{tabular}{ |c|c|c|c|c|c| } 
        \hline
 k  & Upper bound & $h$ used & Memory consumption & Computation time & Iterations \\\hline\hline
 3  & 0.47276 & 9 & 20Gb & 17m & 100 iterations\\\hline
 4  & 0.56578 & 8 & 58Gb & 1h06 & 100 iterations\\\hline
 5  & 0.6325 & 7 & 28Gb & 32m & 100 iterations\\\hline
    \end{tabular}
    \end{center}
    \caption{Our results for the small alphabets using the second version of the code.}\label{tableSmallAlphabetsEdit}
\end{table}

\paragraph{Large alphabets.} In the third version we slightly change the data structure used to store the transformation $T$. This data structure uses maps instead of vectors and is more costly for small alphabets, but the cost is now linear in $\max(k,2h)$ instead of $k$ which means that for fixed $h$ we can compute a lower bound for very large values of $k$.
The results obtained with this program are given in Table \ref{tableLargeAlphabetsEdit}, and the source code is in the file \path{EditDistance_rally_large_alphabets.cpp} from the GIT repository.

\begin{table}[!h]
    \begin{center}
    \begin{tabular}{ |c|c|c|c|c|c|c } 
        \hline
 k  & Upper bound & $h$ used & Memory consumption & Computation time & Iterations \\\hline\hline
 3  & 0.47626 & 8 & 6.4Gb & 1m28s & 50 iterations\\\hline
 4  & 0.57552 & 6 & 0.9Gb & 14s & 50 iterations\\\hline
 5  & 0.63792 & 6 & 3.4Gb & 1m10s & 50 iterations\\\hline
 6  & 0.68424 & 6 & 7.2Gb & 2m46s & 50 iterations\\\hline
 7  & 0.72016 & 6 & 9.4Gb & 4m03s & 50 iterations\\\hline
 8  & 0.74896 & 6 & 10.3Gb & 4m23s & 50 iterations\\\hline
 9  & 0.77264 & 6 & 10.4Gb & 4m43s & 50 iterations\\\hline
 10  & 0.7925 & 6 & 10.5Gb & 5m14s & 50 iterations\\\hline
 11  & 0.8095 & 6 & 10.5Gb & 5m43s & 50 iterations\\\hline
 12  & 0.82432 & 6 & 10.5Gb & 6m15s & 50 iterations\\\hline
 13  & 0.83744 & 6 & 10.5Gb & 6m57s & 50 iterations\\\hline
 14  & 0.84646 & 6 & 10.5Gb & 9m45s & 150 iterations\\\hline
 15  & 0.85608 & 6 & 10.5Gb & 10m54s & 150 iterations\\\hline
 16  & 0.86462 & 6 & 10.5Gb & 10m54s & 150 iterations\\\hline
 17  & 0.87228 & 6 & 10.5Gb & 10m50s & 150 iterations\\\hline
 18  & 0.87916 & 6 & 10.5Gb & 10m47s & 150 iterations\\\hline
 19  & 0.88536 & 6 & 10.5Gb & 10m54s & 150 iterations\\\hline
 20  & 0.89102 & 6 & 10.5Gb & 10m49s & 150 iterations\\\hline
 21  & 0.89614 & 6 & 10.5Gb & 10m55s & 150 iterations\\\hline
 22  & 0.90084 & 6 & 10.5Gb & 10m47s & 150 iterations\\\hline
 23  & 0.90514 & 6 & 10.5Gb & 10m46s & 150 iterations\\\hline
 24  & 0.90912 & 6 & 10.5Gb & 10m35s & 150 iterations\\\hline
 25  & 0.91278  & 6 & 10.5Gb & 10m52s & 150 iterations\\\hline
 32  & 0.93228 & 6 & 10.5Gb & 10m44s & 150 iterations\\\hline
 100 & 0.97946 & 6 & 10.5Gb & 28m & 500 iterations\\\hline
 1000 & 0.9982 & 6 & 10.5Gb & 3h40m & 5000 iterations\\\hline
    \end{tabular}
    \end{center}
    \caption{Our results for larger alphabets using the third version of the code. The results are all obtained using a laptop.}\label{tableLargeAlphabetsEdit}
\end{table}

\section{Longest common subsequence}
The technique that we use to compute bounds on $\gamma_k$ is the same as in \cite{Lueker2009May}, and is moreover almost identical to what we did in the previous section. 
For the sake of completeness, we provide the definitions and the corresponding statement, but we do not repeat the proofs.

For any strings $u,v$, $\operatorname{LCS}(u,v)$ is the size of the longest common subsequence of $u$ and $v$.
Remember, that given two strings $u,v\in \A^*$, and letters $a,b\in \A$,
we have 
\begin{equation}\label{lcsdyn}
 \operatorname{LCS}(au,bv)=
\begin{cases} 
    1+\operatorname{LCS}(u,v) & \text{ if } a=b \\
 \max(\operatorname{LCS}(u,bv),\operatorname{LCS}(au,v)) & \text{ otherwise }
 \end{cases}
\end{equation}
and for all $u$, $\operatorname{LCS}(u,\varepsilon)=\operatorname{LCS}(\varepsilon,u)=0$, where $\varepsilon$ is the empty string.
For any strings $u,v,u',v'$, we have 
\begin{equation}\label{decompositionLCS}
 \operatorname{LCS}(uv,u'v')\ge \operatorname{LCS}(u,u')+\operatorname{LCS}(v,v')\,.
\end{equation} 

For the rest of this section, the $X_i$ and $Y_i$ are independent uniform random variables from $\A=\{1,\ldots, k\}$. 
For all $k$, the quantity $\gamma_k$ is defined as
\begin{equation*}
    \gamma_k = \lim_{n\rightarrow\infty}\frac{\E{\operatorname{LCS}(X_1\ldots X_n, Y_1\ldots Y_n)}}{n}\,.
\end{equation*}
Linearity of expectation and equation \eqref{decompositionLCS} imply that the sequence is subadditive which by Feketes's lemma implies that $\alpha_k$ is well-defined. 

For any $n\in\mathbb{N}$ and, $s,t\in \mathcal{A}^*$, we let 
$$W_n(s,t)= \mathbb{E}\left[\min_{\substack{i+j=n\\i,j\ge0 }} \operatorname{LCS}(sX_1X_2\ldots X_i, tY_1Y_2\ldots Y_j)\right]\,.$$
The following Lemma can be proven by following the same proof as for Lemma \ref{equalitywithVn}.
\begin{lemma}
 For any fixed $s,t\in \mathcal{A}^*$, we have 
    $$\gamma_k = \lim\limits_{n\rightarrow\infty}\frac{W_n(s,t)}{n}\,.$$
\end{lemma}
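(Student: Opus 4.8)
The plan is to transcribe, essentially line by line, the two ingredients of the previous section --- the passage from $\alpha_k$ to $\alpha'_k$, and Lemma~\ref{equalitywithVn} --- replacing the edit distance $d_e$ by $\operatorname{LCS}$ and the subadditive decomposition~\eqref{decomposition} by the superadditive one~\eqref{decompositionLCS}. Because the inequality in~\eqref{decompositionLCS} points the other way, every inequality gets reversed throughout and, in particular, the optimization over the split $i+j=n$ is now the one favourable to $\operatorname{LCS}$, a maximum; no genuinely new idea is needed.

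First I would record the $\operatorname{LCS}$ counterparts of the concentration bounds~\eqref{concVn} and~\eqref{concUn}: changing any single one of the $2n$ independent letters $X_1,\ldots,X_n,Y_1,\ldots,Y_n$ changes $\operatorname{LCS}(X_1\ldots X_n,Y_1\ldots Y_n)$, and likewise $\max_{i+j=n}\operatorname{LCS}(X_1\ldots X_i,Y_1\ldots Y_j)$, by at most $1$, so McDiarmid's inequality applies verbatim with $m=2n$ and $c=1$ and yields the very same error terms. The resulting identity between $\gamma_k$ and $\lim_{n\to\infty} W_n(\varepsilon,\varepsilon)/n$ then rests on two inequalities: an easy one, which is the short monotonicity computation opening the proof that $\alpha_k=2\alpha'_k$, read with its inequality reversed (extending a string never decreases $\operatorname{LCS}$); and a harder one, obtained exactly as in that proof --- by a union bound over the $O(n)$ split points one finds a split attained with probability $\Omega(1/n)$, one combines the independence of the two halves with the superadditive splitting~\eqref{decompositionLCS}, and one invokes the $\operatorname{LCS}$ analogue of~\eqref{concUn}, whose factor $\frac{1}{4(n+1)^2}$ is small enough to beat the square of that union bound --- so that $\E{\operatorname{LCS}(X_1\ldots X_n,Y_1\ldots Y_n)}$ and $\E{\max_{i+j=n}\operatorname{LCS}(X_1\ldots X_i,Y_1\ldots Y_j)}$ end up related to each other just as $\E{U_n}$ and $\E{V_n}$ are in that proof, up to an $o(n)$ error. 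This settles the case $s=t=\varepsilon$.

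It then only remains to remove the fixed prefixes $s,t$, exactly as in Lemma~\ref{equalitywithVn}: prepending $s$ and $t$ perturbs each $\operatorname{LCS}$ appearing above by at most $|s|+|t|$, and forbidding the finitely many split points with $i<|s|$ or $j<|t|$ changes the maximum by at most $|s|+|t|$ as well, so $W_n(s,t)$ differs from its value at $s=t=\varepsilon$ by $O(1)=o(n)$; this gives the general statement.

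The step I expect to be the actual work is the middle one --- converting the pointwise superadditive decomposition into a statement about expectations. One cannot merely take expectations and split; the probabilistic detour above, with the two concentration inequalities doing the heavy lifting, is genuinely needed, and the only thing to double-check is that all the error terms are indeed $o(n)$, which they are, being of order $\sqrt{n\log n}$. Everything else is a purely mechanical reversal of the inequalities of the previous section.
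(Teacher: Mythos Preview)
Your plan is correct and is exactly what the paper does: its entire proof is the one-line remark that the lemma ``can be proven by following the same proof as for Lemma~\ref{equalitywithVn}'', i.e., by transcribing the edit-distance arguments with all inequalities reversed. You are in fact more careful than the paper --- you correctly note that the inner optimisation in $W_n(s,t)$ must be a $\max$ (the paper's $\min$ is a typo; the recursive lower bounds \eqref{recSameLetterLCS}--\eqref{recDiffLetterLCS} only hold with $\max$), and you correctly include the analogue of the $\alpha_k=2\alpha'_k$ lemma, which the paper's pointer to Lemma~\ref{equalitywithVn} alone does not cover but which is precisely what produces the factor $2$ in the subsequent conclusion $\gamma_k\ge 2r$.
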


We let $\vec{W_n}$ be the vector whose components are the $W_n(s,t)$ where $s$ and $t$ range over all strings of length $h$ for a given $h$. Equation \eqref{lcsdyn} implies lower bounds on the components of $\vec{W_n}$ based on $\vec{W_{n-1}} $ and $\vec{W_{n-2}}$.
\begin{fact}
 Let $a\in \mathcal{A}$ and $s,t\in \mathcal{A}^*$, then
    \begin{equation}\label{recSameLetterLCS}
 W_n(as,at) \ge 1+\frac{1}{|\mathcal{A}|^2}\sum_{c,c'\in \mathcal{A}}W_{n-2}(sc,tc')\,.
    \end{equation}
\end{fact}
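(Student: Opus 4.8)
The plan is to mirror the proof of the corresponding Fact for the edit distance (equation \eqref{recSameLetter}), simply tracking the sign change caused by LCS being a maximization rather than a minimization problem. Concretely, I would start from the definition of $W_n(as,at)$, use the recurrence \eqref{lcsdyn} for the case $a=b$ to peel off the common leading letter $a$, which contributes $+1$ to the LCS, and then bound the remaining minimum over split points $i+j=n$ by the minimum over the restricted range $i,j\ge1$ (since LCS is nondecreasing under extending either string, dropping the split points where $i=0$ or $j=0$ can only \emph{increase} the quantity inside the expectation, which is the direction we want for a lower bound).

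First I would write
\begin{align*}
 W_n(as,at)
    &= \mathbb{E}\left[\min_{\substack{i+j=n\\i,j\ge0 }} \operatorname{LCS}(asX_1\ldots X_i, atY_1\ldots Y_j)\right]\\
    &= \mathbb{E}\left[\min_{\substack{i+j=n\\i,j\ge0 }} \left(1+\operatorname{LCS}(sX_1\ldots X_i, tY_1\ldots Y_j)\right)\right]\\
    &\ge 1+\mathbb{E}\left[\min_{\substack{i+j=n\\i,j\ge1 }} \operatorname{LCS}(sX_1\ldots X_i, tY_1\ldots Y_j)\right]\,.
\end{align*}
Then, exactly as in the edit-distance proof, I would rename $X_1=c$ and $Y_1=c'$, split the expectation as an average over the $|\mathcal{A}|^2$ choices of $(c,c')$, and re-index the remaining free letters $X_2,\ldots,X_i$ and $Y_2,\ldots,Y_j$ as a fresh block of total length $n-2$. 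This yields
\begin{align*}
 1+\mathbb{E}\left[\min_{\substack{i+j=n\\i,j\ge1 }} \operatorname{LCS}(sX_1\ldots X_i, tY_1\ldots Y_j)\right]
 = 1+\frac{1}{|\mathcal{A}|^2}\sum_{c,c'\in\mathcal{A}}\mathbb{E}\left[\min_{\substack{i+j=n-2\\i,j\ge0 }} \operatorname{LCS}(scX_1\ldots X_i, tc'Y_1\ldots Y_j)\right]\,,
\end{align*}
and the right-hand side is exactly $1+\frac{1}{|\mathcal{A}|^2}\sum_{c,c'\in\mathcal{A}} W_{n-2}(sc,tc')$, which is \eqref{recSameLetterLCS}.

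There is essentially no hard part here: the only thing to be careful about is the inequality direction when restricting the range of $(i,j)$, and the bookkeeping of the re-indexing step, which relies on the fact that $sX_1\ldots X_i$ with $i\ge1$ can be rewritten as $scX_2\ldots X_i$ with $c=X_1$ uniform and independent, and then $X_2,\ldots,X_i$ together with $Y_2,\ldots,Y_j$ form a uniform i.i.d. block of length $i-1+j-1=n-2$. Since the $X_i$ and $Y_i$ are i.i.d. uniform, conditioning on $(X_1,Y_1)=(c,c')$ and averaging reproduces exactly the $W_{n-2}$ terms. The statement and proof are completely parallel to \eqref{recSameLetter}, with $\le$ replaced by $\ge$ and an additive $+1$ inserted.
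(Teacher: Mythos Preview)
Your overall plan matches the paper's intended argument (the paper omits the proof here and simply refers back to the edit-distance case), but the key inequality step is wrong as written. Shrinking the index set of a minimum can only make it \emph{larger}, so in fact
\[
\min_{\substack{i+j=n\\i,j\ge0}} \operatorname{LCS}(sX_1\ldots X_i,\, tY_1\ldots Y_j)
\;\le\;
\min_{\substack{i+j=n\\i,j\ge1}} \operatorname{LCS}(sX_1\ldots X_i,\, tY_1\ldots Y_j),
\]
which is the opposite of the $\ge$ you wrote. Your appeal to ``LCS is nondecreasing under extending either string'' does not rescue this: passing from $(i,j)=(0,n)$ to $(1,n-1)$ lengthens the first argument but \emph{shortens} the second, so monotonicity gives no comparison between those two terms. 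Worse, with the $\min$ definition one has $W_n(s,t)\le |s|$ for every $n$ (just take $i=0$), so the stated inequality \eqref{recSameLetterLCS} would actually be false for large $n$.

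The real issue is a typo in the paper: the definition of $W_n(s,t)$ should read $\max$ over $i+j=n$, not $\min$. This is the only choice for which the lemma relating $\gamma_k$ to $W_n$ and the companion inequality \eqref{recDiffLetterLCS} make sense, and it is the definition used in Lueker's original argument. With $\max$ in place of $\min$, your chain becomes
\[
W_n(as,at)=1+\mathbb{E}\Bigl[\max_{\substack{i+j=n\\i,j\ge0}}\operatorname{LCS}(\cdots)\Bigr]
\;\ge\; 1+\mathbb{E}\Bigl[\max_{\substack{i+j=n\\i,j\ge1}}\operatorname{LCS}(\cdots)\Bigr],
\]
which is now the trivial direction (restricting the range of a maximum can only decrease it), and your conditioning/re-indexing step then yields \eqref{recSameLetterLCS} exactly as you wrote. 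So: right approach, but replace $\min$ by $\max$ throughout and drop the monotonicity justification, which is neither needed nor applicable.
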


\begin{fact}
 Let $a,b\in \mathcal{A}$ such that $a\not=b$ and $s,t\in \mathcal{A}^*$, then
    \begin{equation}\label{recDiffLetterLCS}
 W_n(as,bt) \ge 
    \begin{cases} 
 \frac{1}{|\mathcal{A}|}\sum_{c\in\mathcal{A}}W_{n-1}(sc,bt)  \\
 \frac{1}{|\mathcal{A}|}\sum_{c\in\mathcal{A}}W_{n-1}(as,tc)
    \end{cases}
    \end{equation}
\end{fact}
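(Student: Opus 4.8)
The plan is to prove the two inequalities in \eqref{recDiffLetterLCS} by mimicking, step for step, the proof of the corresponding Fact for the edit distance, making only two substitutions: the edit-distance recurrence is replaced by \eqref{lcsdyn}, and the remark ``restricting the range of a minimum can only increase it'' is replaced by the monotonicity of $\operatorname{LCS}$ — appending or prepending letters to either string cannot decrease $\operatorname{LCS}$, which is the special case of \eqref{decompositionLCS} in which one of the four strings is empty. Because $\operatorname{LCS}$ is symmetric in its two arguments and this symmetry interchanges the two displayed bounds, it suffices to prove the first one, namely $W_n(as,bt)\ge\frac{1}{|\mathcal{A}|}\sum_{c\in\mathcal{A}}W_{n-1}(sc,bt)$.

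I would expand $W_n(as,bt)$ according to its definition and, inside the expectation, apply \eqref{lcsdyn} to the distinct leading letters $a$ and $b$: this rewrites the $\operatorname{LCS}$ as the maximum of two $\operatorname{LCS}$'s, the first having lost the leading $a$ of the first string and the second having lost the leading $b$ of the second string, and since any entry of a maximum is a lower bound for it, I keep only the first entry. This is precisely the point where the $W_{n-2}$ term present in the edit-distance version disappears: there the recurrence is $1+\min$ of three options, one of which (the ``diagonal'' one) deletes a letter from \emph{both} strings, whereas here the analogous diagonal $\operatorname{LCS}$ is dominated, by monotonicity, by both entries of the maximum in \eqref{lcsdyn}, so it never binds. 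Having shortened the first string by one letter, I would next shorten the second string by its last letter, again invoking monotonicity of $\operatorname{LCS}$ so the inequality keeps the correct direction — this is the step playing the role of the minimum-range manipulation in the edit-distance proof. Now both strings, once the leading fresh random letter of the first string is revealed, carry exactly $n-1$ fresh i.i.d.\ uniform letters; conditioning on the value $c\in\mathcal{A}$ of that leading fresh letter, averaging over $c$, and re-indexing the remaining $n-1$ letters on each side turns the expectation into $W_{n-1}(sc,bt)$ by definition, which is the claimed bound. The second inequality of \eqref{recDiffLetterLCS} follows from the identical argument with the two strings interchanged.

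I do not expect a genuine obstacle, since the whole computation runs parallel to the edit-distance case; the one point needing care is the bookkeeping of string lengths. Applying \eqref{lcsdyn} removes a letter from only one of the two strings, so before conditioning one must remove a compensating letter from the other string — this is exactly what monotonicity of $\operatorname{LCS}$ buys — so that after the leading fresh letter is exposed both strings carry $n-1$ fresh letters and the conditioning step collapses onto a bona fide $W_{n-1}$ value instead of a quantity with mismatched parameters.
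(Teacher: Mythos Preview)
Your outline is close to the paper's intended argument, but the ``compensating'' step is a genuine miscount. After applying \eqref{lcsdyn} and keeping the first branch you correctly reach
\[
W_n(as,bt)\;\ge\;\mathbb{E}\Bigl[\max_{\substack{i+j=n\\i,j\ge0}}\operatorname{LCS}(sX_1\ldots X_i,\,btY_1\ldots Y_j)\Bigr].
\]
You then propose to ``shorten the second string by its last letter'' via monotonicity of $\operatorname{LCS}$, saying this plays the role of the range restriction in the edit-distance proof. It does not: dropping $Y_j$ sends you to a maximum over $i+j'=n-1$, but the first prefix is still $s$ of length $h-1$, so you must \emph{still} expose $X_1=c$ to restore a length-$h$ prefix. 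That consumes a second random letter, and the total fresh budget becomes $(i-1)+(j-1)=n-2$; you would land on $W_{n-2}(sc,bt)$, not $W_{n-1}(sc,bt)$. Your claim that after these moves the strings ``carry exactly $n-1$ fresh i.i.d.\ uniform letters'' is therefore off by one.

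The repair is simply to delete the second-string step: no replacement of the range-restriction argument is needed. Restricting the index set of a \emph{maximum} can only decrease it, so
\[
\max_{\substack{i+j=n\\i,j\ge0}}(\cdots)\;\ge\;\max_{\substack{i+j=n\\i\ge1,\,j\ge0}}(\cdots)
\]
already points in the right direction for a lower bound, exactly as the min version did for the upper bound in the edit-distance Fact. Now condition on $X_1=c$, reindex $i\mapsto i-1$, and read off $\frac{1}{|\mathcal A|}\sum_{c\in\mathcal A}W_{n-1}(sc,bt)$. The second inequality follows by the symmetry you describe. In short: the recurrence removes the prefix letter $a$, and exposing $X_1$ is what restores the prefix length---nothing has to happen to the other string.
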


Let $T$ be the function given by bounds \eqref{recSameLetterLCS} and \eqref{recDiffLetterLCS} such that 
\begin{equation*}
 \vec{W_{n}} \ge T(\vec{W_{n-1}},\vec{W_{n-2}})\,,
\end{equation*}
where $\ge$ is taken component-wise.
We are now ready to state the main Lemma behind the computation of the upper bounds for $\gamma_k$.
\begin{lemma}
If there exists a vector $\vec{w}$ and real $r$ such that
\begin{equation*}
T(\vec{w},\vec{w}-r)\ge \vec{w}+r\,,
\end{equation*}
then there exists a real $c$ such that
\begin{equation*}
    \forall n\ge0, \vec{W_n} \ge \vec{w}+nr+c\,.
\end{equation*}
In particular, this implies that
$\gamma_k\ge 2r$.
\end{lemma}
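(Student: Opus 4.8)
The plan is to transcribe, with every inequality reversed, the proof of the corresponding Lemma for $\alpha'_k$. First I would record the two structural facts about $T$ that the induction needs, exactly as \eqref{transInv} and \eqref{monotonic} were recorded in the edit-distance case. Each coordinate of the map $T$ defined by \eqref{recSameLetterLCS} and \eqref{recDiffLetterLCS} is either of the form $1+\frac{1}{|\mathcal{A}|^2}\sum_{c,c'}(\cdot)$ in a coordinate of its second argument, or a maximum of two expressions of the form $\frac{1}{|\mathcal{A}|}\sum_{c}(\cdot)$ in coordinates of its first argument; in every case it is an average (nonnegative coefficients summing to $1$) possibly shifted by the constant $1$. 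Hence $T$ is \emph{monotonic} --- a coordinatewise maximum of monotone maps is monotone --- and \emph{translationally invariant}, $T(\vec{v}+\rho,\vec{v'}+\rho)=T(\vec{v},\vec{v'})+\rho$ for every real $\rho$, since adding $\rho$ to every coordinate of both arguments raises each average, hence each coordinate of the output, by $\rho$, while the additive constant $1$ and the maxima are untouched.

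Next I would run the induction on $n$. As $\vec{W_0}$ and $\vec{W_1}$ are fixed finite vectors, pick $c$ negative enough that $\vec{W_n}\ge\vec{w}+nr+c$ holds for $n\in\{0,1\}$; this is the base case. For $n\ge 2$, assume the inequality for all $i<n$. Applying in turn the vector recursion $\vec{W_n}\ge T(\vec{W_{n-1}},\vec{W_{n-2}})$, then monotonicity together with the induction hypothesis, then translational invariance with $\rho=(n-1)r+c$, and finally the hypothesis $T(\vec{w},\vec{w}-r)\ge\vec{w}+r$, one obtains
\begin{align*}
\vec{W_n}
&\ge T(\vec{W_{n-1}},\vec{W_{n-2}})\\
&\ge T\bigl(\vec{w}+(n-1)r+c,\;\vec{w}+(n-2)r+c\bigr)\\
&= T(\vec{w},\vec{w}-r)+(n-1)r+c\\
&\ge \vec{w}+r+(n-1)r+c=\vec{w}+nr+c,
\end{align*}
which closes the induction and yields $\vec{W_n}\ge\vec{w}+nr+c$ for all $n\ge0$, the analogue of \eqref{rIsTheValue}.

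For the quantitative consequence, fix $s,t\in\mathcal{A}^*$ and read off the coordinate of this inequality indexed by $(s,t)$: $W_n(s,t)\ge w(s,t)+nr+c$. Dividing by $n$ and letting $n\to\infty$, the characterization of $\gamma_k$ as a limit of $W_n(s,t)/n$ (the LCS counterpart of Lemma~\ref{equalitywithVn}, carrying the same factor $2$ that relates $\alpha'_k$ to $\alpha_k$) gives $\gamma_k\ge 2r$.

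I do not expect a genuine obstacle: the argument is a line-by-line mirror of the $\alpha'_k$ Lemma. The only points that call for attention are checking that the LCS version of $T$ --- which, unlike the edit-distance version, combines an additive $+1$ in the equal-letter branch with a coordinatewise maximum in the unequal-letter branch --- is still monotone and translationally invariant, and keeping the direction of every inequality straight throughout the induction; both are routine.
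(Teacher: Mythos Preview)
Your proposal is correct and is exactly what the paper intends: the paper does not give a separate proof of this lemma but explicitly says it is obtained by following the same argument as the $\alpha'_k$ lemma with the inequalities reversed, which is precisely what you wrote out. Your additional care in checking that the LCS version of $T$ remains monotone and translationally invariant, and in tracking the factor~$2$ when passing from $W_n(s,t)/n$ to $\gamma_k$, fills in details the paper leaves implicit.
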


\paragraph{Implementation and results}
Our implementation is obtained by adapting the code used in the setting of the edit distance. There are a couple of lines to change for the recurrence relations and a few other lines to change because the direction of the inequality is different (in particular, the rounding for division is already as desired in this case while we had to be careful in the edit distance setting).
We have three different codes optimized for the binary case, the small alphabets and arbitrarily large alphabets. The results are respectively given in Table \ref{tableBinaryAlphabetsLCS}, Table \ref{tableSmallAlphabetsLCS}, and Table \ref{tableLargeAlphabetsLCS}.

\begin{table}[!h]
    \begin{center}
    \begin{tabular}{ |c|c|c|c|c|c| } 
        \hline
 h  & Upper bound & Memory consumption & Computation time\\\hline\hline
 15 & 0.78806 & 6.3Gb & 5m48s \\\hline
 16 & 0.78901 & 25Gb & 23m01s \\\hline
 17 & 0.789872 & 100Gb & 2h2m \\\hline
    \end{tabular}
    \end{center}
    \caption{Our results for the binary case. The number of iterations of $T$ before choosing $\vec{v}=\vec{W_n}$ is $n = 150$. Our result improves on the bound given by \cite{Lueker2009May}, but fail to improve on the bound given by \cite{Heineman2024Jul}.}\label{tableBinaryAlphabetsLCS}
\end{table}

\begin{table}[!h]
    \begin{center}
    \begin{tabular}{ |c|c|c|c|c|c| } 
        \hline
 k  & Lower bound & $h$ used & Memory consumption & Computation time & Iterations \\\hline\hline
 3 \tablefootnote{The discrepancy between $0.6821$ and the bound $0.682218$ from \cite{Heineman2024Jul} which is obtained with same algorithm and the same value $h$ can be explained by the choice of $p=100000$ and $\varepsilon=0.00005$ used for the exact computation of $v$ and $r$ in fixed arithmetic precision.}  & 0.6821 & 9 & 12.7Gb & 9m33s & 100 iterations\\\hline
 3  & 0.68422 & 10 & 114b & 2h44m29s& 100 iterations\\\hline
 4  & 0.61046 & 7 & 2.2Gb & 1m03 & 50 iterations\\\hline
 4  & 0.61422 & 8 & 35.2Gb & 29m & 100 iterations\\\hline
 5  & 0.56206 & 7 & 14.3Gb & 8m33s & 50 iterations\\\hline
    \end{tabular}
    \end{center}
    \caption{Our results for the small alphabets using the second version of the code.}\label{tableSmallAlphabetsLCS}
\end{table}

\begin{table}[!h]
    \begin{center}
    \begin{tabular}{ |c|c|c|c|c|c| } 
        \hline
 k  & Lower bound & $h$ used & Memory consumption & Computation time  \\\hline\hline
 3  & 0.67932 & 8 & 4.1Gb & 54s\\\hline
 4  & 0.61046 & 7 & 7.0Gb & 1m54s\\\hline
 5  & 0.55686 & 6 & 1.7Gb & 26s\\\hline
 6  & 0.51850 & 6 & 3.0Gb & 55s \\\hline
 7  & 0.48712 & 6 & 3.8Gb & 1m14s \\\hline
 8  & 0.46074 & 6 & 4.0Gb & 1m22s \\\hline
 9  & 0.43806 & 6 & 4.0Gb & 1m30s \\\hline
 10 & 0.41826 & 6 & 4.0Gb & 1m35s \\\hline
 11 & 0.40072 & 6 & 4.0Gb & 1m43s \\\hline
 12 & 0.38504 & 6 & 4.0Gb & 1m47s \\\hline
 13 & 0.37088 & 6 & 4.0Gb & 1m58s \\\hline
 14 & 0.35798 & 6 & 4.0Gb & 1m55s \\\hline
 15 & 0.34616 & 6 & 4.0Gb & 1m56s \\\hline
 16 & 0.33528 & 6 & 4.0Gb & 1m58s \\\hline
 17 & 0.32518 & 6 & 4.0Gb & 1m56s \\\hline
 18 & 0.31580 & 6 & 4.0Gb & 2m02s \\\hline
 19 & 0.30702 & 6 & 4.0Gb & 1m55s \\\hline
 20 & 0.29880 & 6 & 4.0Gb & 1m57s \\\hline
 21 & 0.29106 & 6 & 4.0Gb & 1m55s \\\hline
 22 & 0.28376 & 6 & 4.0Gb & 1m56s \\\hline
 23 & 0.27686 & 6 & 4.0Gb & 1m57s \\\hline
 24 & 0.27032 & 6 & 4.0Gb & 2m5s  \\\hline
 25 & 0.26412 & 6 & 4.0Gb & 1m59s \\\hline
 50 & 0.16930 & 6 & 4.0Gb & 1m52s \\\hline
 100 & 0.0991 & 6 & 4.0Gb & 1m59s \\\hline
 1000 &0.01164 & 6 & 4.0Gb & 2m6s \\\hline
    \end{tabular}
    \end{center}
    \caption{Our results for larger alphabets using the third version of the code. The results are all obtained using a laptop. The number of iterations of $T$ before choosing $\vec{v}=\vec{W_n}$ is $n = 50$.}\label{tableLargeAlphabetsLCS}
\end{table}

The computation of upper bounds on $\gamma_k$ for really large $k$ might be of limited interest. Indeed, general lower bounds given by simple explicit closed-form formulas will be better for large $k$. 
If we limit ourselves to windows of size $h=6$, for $k$ large enough all the letters in the window are different with really high probability, and it is not hard to verify that the lower bounds that this technique can establish then behave in $O(1/k)$. On the other hand, it is known that $\gamma_k$ behaves in $2/\sqrt{k}+o(1/\sqrt{k})$ \cite{Kiwi2005Nov}. It is claimed in \cite{Baeza-Yates1999Aug} that \cite{chvatal} contains a proof of $\gamma_k\ge 1/\sqrt{k}$, but the only result in \cite{chvatal} that resembles this is $\gamma_k\ge \frac{2k^2}{k^3+2k-1}$. An explicit lower bound in $\Theta(\sqrt{k})$ can probably be adapted from the asymptotic lower bound of \cite{Deken1979Jan}. Assuming that the claimed lower bound of $\gamma_k\ge 1/\sqrt{k}$ is true, it is a better bound that what can be done with our software on modern machines for any $k\ge k_0$ for some $k_0$ between $50$ and $100$.

\printbibliography
\end{document}